\documentclass[11pt]{article}

\usepackage[a4paper,margin=28mm]{geometry}
\usepackage{amsmath,amssymb,amsthm,mathtools}
\usepackage{booktabs}
\usepackage{array}
\usepackage{microtype}
\usepackage[hidelinks]{hyperref}
\usepackage[nameinlink,noabbrev]{cleveref}

\newtheorem{theorem}{Theorem}[section]
\newtheorem{lemma}[theorem]{Lemma}
\newtheorem{proposition}[theorem]{Proposition}

\theoremstyle{definition}

\newtheorem{remark}[theorem]{Remark}

\newcommand{\supp}{\operatorname{supp}}

\title{The Remaining $K_4$-Free Case in the Multipartite Clique Problem}
\author{Yuuki Kasugai\\{\small Japan}}
\date{}

\begin{document}
\maketitle

\begin{abstract}
For integers $n,r,t$ with $2\le t\le r-1$, let $f(n,r,t+1)$ denote the largest possible minimum degree of a balanced $r$-partite graph with parts of size $n$ and containing no copy of $K_{t+1}$. Lo, Treglown and Zhao identified $f(n,7,4)$ as the only remaining case in their treatment of the $K_4$-free family. I determine this function for every $n\ge1$. First, the corresponding three-colourable extremum $\delta(n,7,3)$ is reduced to a $7\times3$ integer matrix problem and determined exactly. Second, a structural argument shows that every balanced $7$-partite $K_4$-free graph $G$ with
\[
\delta(G)>\frac{132}{31}n
\]
is three-colourable. Consequently,
$f(n,7,4)=\lfloor30n/7\rfloor$ except when $n\equiv4\pmod7$ and $n\ge11$, where it is one less.
\end{abstract}

\medskip
\noindent\textbf{2020 Mathematics Subject Classification.} Primary 05C35; Secondary 05C07, 05C15.

\noindent\textbf{Keywords.} Tur\'an-type problem; multipartite graph; chromatic number; minimum degree.

\section{Introduction}

In 1975, Bollob\'as, Erd\H{o}s and Szemer\'edi asked for the maximum possible minimum degree in a balanced multipartite graph avoiding a complete graph of prescribed order~\cite{BES1975}. Following Lo, Treglown and Zhao~\cite{LTZ2022}, for integers $n,r,t$ with $2\le t\le r-1$ define
\[
f(n,r,t+1):=\max\{\delta(G):G\text{ is balanced $r$-partite with parts of size $n$ and }K_{t+1}\nsubseteq G\}.
\]
They also define
\[
\delta(n,r,t):=\max\{\delta(G):G\text{ is balanced $r$-partite with parts of size $n$ and }\chi(G)\le t\}.
\]
Every $t$-colourable graph is $K_{t+1}$-free, so
\begin{equation}\label{eq:trivial-lower}
\delta(n,r,t)\le f(n,r,t+1).
\end{equation}

Lo, Treglown and Zhao essentially determined $f(n,r,4)$ for every $r\ne7$ and explicitly singled out $f(n,7,4)$ as the only remaining case for the $K_4$-free family~\cite[Section~7]{LTZ2022}. Their Corollary~1.4 excludes $r=7$, but its $r\equiv1\pmod3$ formula has the same linear scale that would become $30n/7$ at $r=7$. The result below shows exactly what happens in this excluded case: the linear term persists, while the integral correction depends on $n\bmod7$ and is nonzero precisely when $n\equiv4\pmod7$ and $n\ge11$. It also establishes $f(n,7,4)=\delta(n,7,3)$, contributing to the equality question raised in~\cite[Section~7]{LTZ2022}.

The matrix encoding and the support inequalities used below follow the same general framework as the treatment of the colourable extremum in~\cite{LTZ2022}. The additional points needed for $r=7$ are the equality analysis in the residue class $n\equiv4\pmod7$, which shows that the general upper bound is missed by exactly one for $n\ge11$, and the structural argument in \cref{sec:structural}, which uses the original seven-part partition to exclude the non-three-colourable branch above the required threshold. The main result is the following.

\begin{theorem}\label{thm:main}
For every integer $n\ge1$,
\[
f(n,7,4)=
\begin{cases}
\left\lfloor\dfrac{30n}{7}\right\rfloor-1,& n\equiv4\pmod7\text{ and }n\ge11,\\[4mm]
\left\lfloor\dfrac{30n}{7}\right\rfloor,&\text{otherwise}.
\end{cases}
\]
Moreover,
\[
f(n,7,4)=\delta(n,7,3)
\]
for every $n\ge1$.
\end{theorem}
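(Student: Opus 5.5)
The proof has two halves, following the abstract: an exact determination of $\delta(n,7,3)$, and a structural theorem showing that every $K_4$-free balanced $7$-partite graph with large minimum degree is $3$-colourable. Given both, \eqref{eq:trivial-lower} gives $\delta(n,7,3)\le f(n,7,4)$. The structural theorem forces $f(n,7,4)\le\delta(n,7,3)$ whenever $\delta(n,7,3)>132n/31$, since otherwise an extremal $K_4$-free graph would have minimum degree above $132n/31$, hence be $3$-colourable, hence have minimum degree at most $\delta(n,7,3)$. The threshold comparison is easy in the main range: $30/7=4.2857\ldots>132/31=4.2580\ldots$, and the gap $(30/7-132/31)n=6n/217$ exceeds $2$ once $n\ge 73$. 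So for large $n$ even $\lfloor 30n/7\rfloor-1>132n/31$. The finitely many small $n$ I would check directly, using the exact formula for $\delta(n,7,3)$ against $132n/31$. If for tiny $n$ the inequality fails, I would fall back on a direct argument, for instance a degree count, or the trivial bound that each vertex has at most $6n$ neighbours together with the $K_4$-freeness.

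\textbf{Step 1: the colourable extremum.} A $3$-colourable balanced $7$-partite graph with colour classes $C_1,C_2,C_3$ is encoded by the $7\times3$ nonnegative integer matrix $a_{ij}=|V_i\cap C_j|$, with row sums $n$. Making the graph complete between different colour classes and different parts only increases degrees, so I may assume this. A vertex in $V_i\cap C_j$ then has degree $6n-\sum_{i'\ne i}a_{i'j}=6n-(s_j-a_{ij})$, where $s_j$ is the $j$-th column sum. Hence
\[
\delta(n,7,3)=\max_A\ \min_{(i,j)\in\supp A}\bigl(6n-s_j+a_{ij}\bigr).
\]
\emph{Upper bound.} Each row must place positive mass in some column, and the column sums total $7n$. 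For a target degree $D$, every support entry satisfies $a_{ij}\ge s_j-(6n-D)$. Summing these support inequalities over the entries in column $j$, and then over columns, bounds $D$ in terms of the support pattern. The extremal pattern should be one where column $j$ is supported on $k_j$ rows, with $k_1+k_2+k_3\ge 7$ and $(k_j-1)s_j\le k_j(6n-D)$. This yields $D\le 30n/7$, attained with supports of sizes like $(3,2,2)$ or with overlaps. \emph{Constructions.} For each residue of $n\bmod 7$ I would give explicit matrices achieving $\lfloor 30n/7\rfloor$.

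\textbf{The obstacle in Step 1.} The hard point is the residue $n\equiv4\pmod 7$. There I have to show that the integrality constraints on the equality configurations make $\lfloor 30n/7\rfloor$ unattainable for $n\ge11$, while for $n=4$ a special matrix does attain it. My plan is to analyse the equality case of the summed support inequalities, which forces rigid column sums and entries. Then I reduce modulo $7$ and obtain a contradiction whenever $n\equiv 4$ and $n\ge 11$; the case $n=4$ escapes because entries are allowed to vanish.

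\textbf{Step 2: the structural part, which I expect to be the main obstacle.} Take $G$ to be $K_4$-free with $\delta(G)>132n/31$. Each vertex misses fewer than $(6-132/31)n$ vertices outside its own part. I would first find a triangle, then study the common-neighbourhood structure. Every vertex is adjacent to at most two vertices of any triangle, so vertices split into classes according to which pair of the triangle they see. Degree counting should show that these classes, suitably cleaned, form independent sets covering everything. The seven-part partition is used to control how many non-neighbours can lie within a single part; the constant $132/31$ should emerge from optimising this count. I would attempt the argument by contradiction, assuming $\chi(G)\ge4$ and locating a $4$-critical-type configuration.
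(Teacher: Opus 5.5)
Your outer reduction (lower bound from the three-colourable class, upper bound from a three-colourability threshold) matches the paper. But Step~2, which carries the real content, is not yet an argument, and degree counting around a triangle cannot by itself reach the needed range. For $K_4$-free graphs on $N=7n$ vertices such arguments stop at $\delta>5N/8=35n/8$: an independent set of size $3N/8$ joined to a balanced $C_5$-blow-up is $4$-chromatic with minimum degree $5N/8>30n/7$.

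What excludes this shape in the balanced seven-partite setting is that no part can meet both sides of the join. Hence the independent side must have size a multiple of $n$, and your proposal never identifies this. The paper makes it work as follows.
\begin{itemize}
\item Set $d=\delta(G)/n$ and $Q=6-d<54/31$. Apply Goddard--Lyle to a maximal $K_4$-free supergraph (legitimate since $132/31>21/5$). This gives $V=A\sqcup B$ with $A$ independent and $G[B]$ triangle-free, and $G[B]$ is non-bipartite if $\chi(G)\ge4$.
\item Write $a=|A|/n$. Andr\'asfai--Erd\H{o}s--S\'os on $G[B]$ gives $a\ge(5d-14)/3>2$.
\item A vertex of $A\cap V_i$ has all neighbours in $B\setminus V_i$, so $a-|A\cap V_i|/n\le Q$ on each of the at least three parts meeting $A$. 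Hence $a\le\frac32Q<3$.
\item This bound gives $\delta(G[B])/|B|\ge(5d-18)/(3d-4)$, which exceeds $3/8$ exactly when $31d>132$; this is the source of the constant. So H\"aggkvist yields $G[B]\to C_5$ with five nonempty fibres.
\item Count non-neighbours of $B_i$-vertices in $B_i\cup B_{i+2}\cup B_{i+3}$ and sum over $i$. This shows that a part meeting both $A$ and $B$ would force $Q\ge16/9$, which is false.
\end{itemize}
So $A$ is a union of whole parts, and $a$ is an integer in $(2,3)$, a contradiction.

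Step~1 also has a hole. Your constraints $(k_j-1)s_j\le k_j(6n-D)$ with $k_1+k_2+k_3\ge7$ are column-wise only. They are satisfied with $6n-D=14n/9$ (all $k_j=3$, $s_j=7n/3$), so they give only $D\le40n/9$. Also, a $(3,2,2)$ support pattern makes every row monochromatic, forcing defect $2n$ and degree $4n$; the paper's constructions have supports $(7,3,3)$.

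The missing ingredient is a row inequality. If row $i$ uses exactly colours $p,p'$, adding $s_p-a_{ip}\le E$ and $s_{p'}-a_{ip'}\le E$ gives $s_p+s_{p'}\le2E+n$. So the omitted colour satisfies $s_\ell\ge6n-2E$, while its column bound gives $s_\ell\le\frac32E$ or $s_\ell\le2n$. Hence $E\ge12n/7$ (Lo--Treglown--Zhao, Proposition~5.1).

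The $n\equiv4\pmod7$ analysis rests on equality in exactly these split-row inequalities. Assume $E\le12q+7$ with $n=7q+4$. Then no row is trichromatic, and each split row forces its omitted colour to have sum exactly $18q+10$ and support exactly three. Exactly two split types occur. Counting the support of one such colour gives $b(q+1)=1$, where $b$ is its number of monochromatic rows. This is impossible for $q\ge1$, while $q=0$ allows $b=1$, which is why $n=4$ escapes.

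Finally, your threshold check is off by one. Since $\delta(G)$ is an integer, you need only $\delta(n,7,3)+1>132n/31$. This holds for all $n\ge1$, with smallest margin $5/31$ at $n=11$. Your requirement $\delta(n,7,3)>132n/31$ instead fails for $n$ as large as $39$, where no degree-count fallback would help.
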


The proof has two independent components. First, \cref{sec:three-colour} determines $\delta(n,7,3)$ exactly. Second, \cref{sec:structural} proves that a balanced $7$-partite $K_4$-free graph whose minimum degree is only slightly above this value must already be three-colourable. The proof of \cref{thm:main} is then completed in \cref{sec:main-proof}.

\section{The three-colourable extremum}\label{sec:three-colour}

Let $G$ be a balanced $7$-partite graph with original parts
\[
V_1,\ldots,V_7,\qquad |V_i|=n,
\]
and suppose $\chi(G)\le3$. Fix a proper colouring with colour classes $W_1,W_2,W_3$ and write
\[
a_{ij}:=|V_i\cap W_j|,
\qquad
c_j:=|W_j|=\sum_{i=1}^7a_{ij}.
\]
Thus
\begin{equation}\label{eq:row-sum}
a_{i1}+a_{i2}+a_{i3}=n\qquad (i\in[7]).
\end{equation}

Every edge whose endpoints lie in distinct original parts and in distinct colour classes may be added. This operation preserves both the original $7$-partite structure and three-colourability and cannot decrease minimum degree. Consequently an extremal graph is completely described by the matrix $A=(a_{ij})$.

For a vertex in a nonempty cell $V_i\cap W_j$, the only vertices outside its own original part to which it cannot be adjacent are the $c_j-a_{ij}$ vertices of colour $j$ lying in other original parts. Hence its degree is
\[
6n-(c_j-a_{ij}).
\]
Define
\begin{equation}\label{eq:D-def}
D(A):=\max_{a_{ij}>0}(c_j-a_{ij}).
\end{equation}
Then
\begin{equation}\label{eq:delta-matrix}
\delta(n,7,3)=6n-\min_A D(A),
\end{equation}
where the minimum ranges over nonnegative integer $7\times3$ matrices satisfying \eqref{eq:row-sum}.

Lo, Treglown and Zhao proved the following general bound~\cite[Proposition~5.1]{LTZ2022}: if $m,t\ge2$ and
\[
(m-1)t<r<mt,
\]
then
\[
\delta(n,r,t)\le (r-1)n-
\left\lceil\frac{(m-1)(r-1)n}{mt-2}\right\rceil.
\]
For $r=7$ and $t=3$, the inequality $6<7<9$ gives $m=3$, and hence
\begin{equation}\label{eq:LTZ-upper}
\delta(n,7,3)\le6n-\left\lceil\frac{12n}{7}\right\rceil.
\end{equation}
Equivalently,
\begin{equation}\label{eq:D-lower}
D(A)\ge\left\lceil\frac{12n}{7}\right\rceil
\end{equation}
for every admissible matrix $A$. The following argument is the $r=7$, $t=3$ specialization of the proof of Proposition~5.1 in~\cite{LTZ2022}, included here for completeness.

\begin{proposition}\label{prop:matrix-lower}
Every nonnegative integer $7\times3$ matrix satisfying \eqref{eq:row-sum} obeys \eqref{eq:D-lower}.
\end{proposition}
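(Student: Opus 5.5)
The plan is to turn the definition of $D:=D(A)$ into linear constraints on the column sums $c_j$, play these against the identity $\sum_j c_j=7n$ from \eqref{eq:row-sum}, and then show $7D\ge12n$. Since $D$ is an integer, that gives $D\ge\lceil 12n/7\rceil$. For each column $j$ let $S_j:=\{i: a_{ij}>0\}$ and $s_j:=|S_j|$. Each row has at least one positive entry, so $\sum_j s_j\ge 7$.

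The basic inequality comes from summing $c_j-a_{ij}\le D$ over $i\in S_j$. Because $\sum_{i\in S_j}a_{ij}=c_j$, this gives
\[
(s_j-1)\,c_j\le s_j D ,
\]
so $c_j\le \tfrac{s_j}{s_j-1}D$ whenever $s_j\ge2$. There are two supplementary bounds.
\begin{itemize}
\item If $s_j=1$, then $c_j\le n$.
\item More generally, $c_j\le D+a_{ij}\le D+n$ for any $i\in S_j$. In particular, if some row $i$ is \emph{pure}, meaning its support is the single column $j$, then $a_{ij}=n$ and this bound applies to column $j$.
\end{itemize}
A further family of constraints comes from restricting to subsets of columns. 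For a set $J$ of columns, every row whose support lies inside $J$ contributes its full $n$ to $\sum_{j\in J}c_j$. Writing $R_J$ for the number of such rows, we get
\[
R_J\,n\le\sum_{j\in J}c_j .
\]
Also, a row not supported inside $J$ has an entry in some column outside $J$, which bounds $R_J$ from below in terms of the $s_j$ with $j\notin J$. For example, taking $J=\{2,3\}$ gives $R_{\{2,3\}}\ge 7-s_1$.

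Next comes a case analysis on the support vector $(s_1,s_2,s_3)$, ordered so that $s_1\le s_2\le s_3$, and on the number $p$ of pure rows. Non-pure rows have support at least $2$, so $\sum_j s_j\ge 14-p$.
\begin{itemize}
\item If $p=0$, the weakest case is $(4,5,5)$. The basic inequality then gives $7n\le(\tfrac43+\tfrac54+\tfrac54)D=\tfrac{23}{6}D$, so $D\ge 42n/23>12n/7$. Larger $s_j$ only strengthen this, since $s/(s-1)$ is decreasing.
\item When the support is lopsided, say $s_1$ small and $s_2,s_3$ large, summing the basic inequality over all columns is too weak; the pattern $(2,6,6)$ only gives $D\ge 7n/4.4$. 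Here one uses the subset constraint with $J=\{2,3\}$ instead: at least $7-s_1$ rows live inside columns $2,3$. For $(2,6,6)$ this gives $5n\le c_2+c_3\le \tfrac{12}{5}D$, so $D\ge 25n/12$.
\item When there are pure rows, the bound $c_j\le D+n$ replaces the basic inequality in the columns containing them. For instance, if every row is pure, then $c_j=s_jn$ and $D\ge(s_3-1)n\ge 2n$.
\end{itemize}
Each case should reduce to a two-line linear computation giving $D\ge\alpha n$ with $\alpha\ge 12/7$.

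The main obstacle is organising this case analysis so that it is both complete and short. The threshold $12/7=\tfrac{(m-1)(r-1)}{mt-2}$ with $m=t=3$ suggests the binding configuration is a specific mixed pattern. My guess is that it has column supports near $(3,3,3)$ with a few pure rows, where $D$ is pinned by $c_j\le D+n$ in some columns and by $c_j\le\tfrac32D$ in others. I would first locate that extremal pattern by solving the small linear program with variables $c_j$, $p$ and the $s_j$. I would then check that every other pattern is dominated by it, using the monotonicity of $s/(s-1)$ and the subset counts $R_J$. Following the proof of \cite[Proposition~5.1]{LTZ2022}, I would try to replace the enumeration by a single weighted sum of the basic inequalities and the bounds $c_j\le D+n$, with weights depending only on whether a column contains a pure row.
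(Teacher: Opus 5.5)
\textbf{Gap: the proposal is not a proof, and its inequalities are too weak exactly where it matters.} The case analysis over $(s_1,s_2,s_3)$ and $p$ is left open. More seriously, the inequalities you plan to combine cannot give $12n/7$ at the binding configuration.

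Take the support pattern of the extremal construction \eqref{eq:construction-matrix}: one row pure in colour $1$, three rows split $\{1,2\}$ and three rows split $\{1,3\}$. So $(s_1,s_2,s_3)=(7,3,3)$ and $p=1$. Your constraints for this pattern are:
\begin{itemize}
\item the basic inequalities $6c_1\le 7D$, $2c_2\le 3D$, $2c_3\le 3D$;
\item the bounds $c_j\le D+n$;
\item the subset counts $R_J n\le\sum_{j\in J}c_j$, with $R_{\{1\}}=1$, $R_{\{1,2\}}=R_{\{1,3\}}=4$ and the rest trivial;
\item the identity $\sum_j c_j=7n$.
\end{itemize}
All of these hold with $D=\tfrac{42}{25}n$, $c_1=\tfrac{49}{25}n$ and $c_2=c_3=\tfrac{63}{25}n$. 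Since $\tfrac{42}{25}<\tfrac{12}{7}$, no nonnegative combination of your inequalities yields $7D\ge 12n$. In particular, the weighted sum you sketch at the end, with weights depending on whether a column contains a pure row, cannot work. The per-column bound $c_j\le D+a_{ij}\le D+n$ discards precisely the information that is tight. For the same reason, your guess about the extremal pattern points the wrong way. There the pure row is slack and equality is carried by the split rows.

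\textbf{Missing idea: couple the defect inequalities along a single row.} Let row $i$ be non-monochromatic with colour set $C(i)$. Then $c_j\le D+a_{ij}$ for every $j\in C(i)$. Because $\sum_{j\in C(i)}a_{ij}=n$, these add to $\sum_{j\in C(i)}c_j\le |C(i)|\,D+n$, rather than $|C(i)|(D+n)$. For a colour $\ell$ absent from row $i$, your own bounds already give $c_\ell\le\max\{2n,\tfrac32 D\}$.

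Now assume $D<12n/7$. If $|C(i)|=2$, this gives $7n<n+2\cdot\tfrac{12n}{7}+\tfrac{18n}{7}=7n$. If $|C(i)|=3$, it gives $7n<n+\tfrac{36n}{7}$. Both are contradictions. If instead every row is monochromatic, some colour covers three rows and $D\ge 2n$. This is the paper's two-line argument, and it needs no enumeration of support vectors.

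\textbf{Smaller point.} Your claim that $(4,5,5)$ is the weakest pattern when $p=0$ is false. The pattern $(3,4,7)$ is not componentwise dominated by it, and it gives only $\tfrac32+\tfrac43+\tfrac76=4$, i.e.\ $D\ge 7n/4$. That still exceeds $12n/7$, but your monotonicity argument does not cover it.
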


\begin{proof}
Suppose instead that $D(A)<12n/7$, and write $c_j$ for the column sums. For each colour $j$, define
\[
\supp(j):=\{i\in[7]:a_{ij}>0\}.
\]
For a fixed colour $j$, let $k_j=|\supp(j)|$. If $k_j\le2$, then $c_j\le2n<18n/7$. If $k_j\ge3$, the smallest positive entry in column $j$ is at most $c_j/k_j$, so
\[
D(A)\ge c_j-\frac{c_j}{k_j}\ge\frac23c_j,
\]
and again $c_j<18n/7$.

If every row is monochromatic, then some colour occurs on at least three rows; for that colour, $D(A)\ge2n>12n/7$, a contradiction. Hence some row $i$ uses $s\in\{2,3\}$ colours. Let $C(i)$ be the set of colours present in that row. For $j\in C(i)$,
\[
c_j\le D(A)+a_{ij}<\frac{12n}{7}+a_{ij},
\]
while every $j\notin C(i)$ satisfies $c_j<18n/7$. Therefore
\[
7n=\sum_{j=1}^3c_j
< n+s\frac{12n}{7}+(3-s)\frac{18n}{7}.
\]
For $s=2$ the right-hand side equals $7n$, and for $s=3$ it is smaller than $7n$, a contradiction in either case. Thus $D(A)\ge12n/7$, and integrality gives \eqref{eq:D-lower}.
\end{proof}

\subsection{Explicit constructions}

Write
\[
n=7q+s,\qquad 0\le s\le6.
\]
The construction below comes from first taking $x_1=x_2=x_3=x$ over the reals. The two dominant defect terms are then $n+5x$ and $2n-2x$; balancing them gives $x=n/7$. The table chooses nearby integers according to the residue class of $n$.

Choose integers $x_1,x_2,x_3$ according to the following table.
\begin{center}
\begin{tabular}{c|c}
\toprule
$s$ & $(x_1,x_2,x_3)$\\
\midrule
$0,1$ & $(q,q,q)$\\
$2,3$ & $(q,q,q+1)$\\
$4,5$ & $(q,q+1,q+1)$\\
$6$   & $(q+1,q+1,q+1)$\\
\bottomrule
\end{tabular}
\end{center}
Consider the matrix
\begin{equation}\label{eq:construction-matrix}
A=
\begin{pmatrix}
n&0&0\\
x_1&n-x_1&0\\
x_2&n-x_2&0\\
x_3&n-x_3&0\\
x_1&0&n-x_1\\
x_2&0&n-x_2\\
x_3&0&n-x_3
\end{pmatrix}.
\end{equation}
Put $S=x_1+x_2+x_3$. Its column sums are
\[
c_1=n+2S,
\qquad
c_2=c_3=3n-S.
\]
A direct substitution into \eqref{eq:D-def} yields the next statement.

\begin{proposition}\label{prop:construction}
For the matrix in \eqref{eq:construction-matrix},
\[
D(A)=
\begin{cases}
\left\lceil\dfrac{12n}{7}\right\rceil+1,&n\equiv4\pmod7\text{ and }n\ge11,\\[4mm]
\left\lceil\dfrac{12n}{7}\right\rceil,&\text{otherwise}.
\end{cases}
\]
In particular,
\begin{equation}\label{eq:construction-lower}
\delta(n,7,3)\ge
\begin{cases}
\left\lfloor\dfrac{30n}{7}\right\rfloor-1,&n\equiv4\pmod7\text{ and }n\ge11,\\[4mm]
\left\lfloor\dfrac{30n}{7}\right\rfloor,&\text{otherwise}.
\end{cases}
\end{equation}
\end{proposition}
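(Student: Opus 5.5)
The plan is a direct computation: list the defects $c_j-a_{ij}$ over the nonzero entries of \eqref{eq:construction-matrix}, take their maximum in each residue class $s=n-7q$, and compare it with $\lceil 12n/7\rceil$. Because $D(A)$ only counts nonzero cells, the case $q=0$ (that is, $n\le6$) must be checked separately, since then some $x_k$ vanish.

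\emph{Step 1: the three defect values.} Each $x_k\le q+1\le n$, and $x_k<n$ in every row of the table (the only tight cases would be $n=1$ with $x_k=1$, which does not occur), so the entries $n-x_k$ in columns $2$ and $3$ are always positive. The possible defects are then:
\begin{itemize}
\item $2S$, from row $1$ in column $1$;
\item $n+2S-x_k$, from the entry $x_k$ in column $1$, and only when $x_k>0$;
\item $(3n-S)-(n-x_k)=2n-S+x_k$, from the entries in columns $2$ and $3$.
\end{itemize}
Hence
\[
D(A)=\max\Bigl(2S,\; n+2S-\min_{x_k>0}x_k,\; 2n-S+\max_k x_k\Bigr),
\]
where the middle term is omitted if all $x_k=0$. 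The first term never dominates: since $S\le 3q+3$ and $n\ge 7q$, we have $2S\le 2n-S+\max_k x_k$ whenever $n\ge1$.

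\emph{Step 2: the case $q\ge1$.} Here every $x_k\ge1$, so the minimum in the middle term is $\min_k x_k$. Substituting the table row by row gives the following values of the two main terms and of $\lceil 12n/7\rceil$.
\begin{center}
\begin{tabular}{c|c|c|c}
\toprule
$s$ & $n+2S-\min_k x_k$ & $2n-S+\max_k x_k$ & $\lceil 12n/7\rceil$\\
\midrule
$0$ & $12q$ & $12q$ & $12q$\\
$1$ & $12q+1$ & $12q+2$ & $12q+2$\\
$2$ & $12q+4$ & $12q+4$ & $12q+4$\\
$3$ & $12q+5$ & $12q+6$ & $12q+6$\\
$4$ & $12q+8$ & $12q+7$ & $12q+7$\\
$5$ & $12q+9$ & $12q+9$ & $12q+9$\\
$6$ & $12q+11$ & $12q+10$ & $12q+11$\\
\bottomrule
\end{tabular}
\end{center}
In every row except $s=4$, the larger of the two terms equals $\lceil 12n/7\rceil$. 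For $s=4$, the middle term $12q+8$ exceeds $\lceil 12n/7\rceil=12q+7$ by exactly one. This is the exceptional class: $q\ge1$ and $s=4$ means precisely $n\equiv4\pmod7$ with $n\ge11$.

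\emph{Step 3: the case $q=0$.} Now some $x_k$ are zero, and I check $n=1,\dots,6$ directly.
\begin{itemize}
\item $n=1$: all $x_k=0$, so the middle term is absent and $D=2=\lceil12/7\rceil$.
\item $n=2$: $x=(0,0,1)$, $S=1$, giving $D=\max(2,3,4)=4=\lceil 24/7\rceil$.
\item $n=3$: $x=(0,0,1)$, $S=1$, giving $D=\max(2,4,6)=6=\lceil 36/7\rceil$.
\item $n=4$: $x=(0,1,1)$, $S=2$. The minimum positive $x_k$ is now $1$ rather than $q=0$, which gives $D=\max(4,7,7)=7=\lceil 48/7\rceil$. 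The exception therefore does not occur at $n=4$.
\item $n=5$: $x=(0,1,1)$, $S=2$, giving $D=\max(4,8,9)=9=\lceil 60/7\rceil$.
\item $n=6$: $x=(1,1,1)$, and the row $s=6$ of the table with $q=0$ gives $D=11=\lceil 72/7\rceil$.
\end{itemize}

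\emph{Step 4: the degree bound.} By \eqref{eq:delta-matrix}, $\delta(n,7,3)\ge 6n-D(A)$ for this particular $A$. The identity
\[
6n-\left\lceil\frac{12n}{7}\right\rceil=\left\lfloor 6n-\frac{12n}{7}\right\rfloor=\left\lfloor\frac{30n}{7}\right\rfloor
\]
then converts the formula for $D(A)$ into \eqref{eq:construction-lower}.

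The only step needing care is the exceptional residue class: one must see that for $s=4$ the defect $n+2S-x_1$ is forced up by one, and that at $n=4$ this effect disappears because $x_1=0$ is not a support entry. Everything else is routine substitution.
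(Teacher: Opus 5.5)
Your proposal is correct and follows the paper's proof essentially step for step. It uses the same three defect families $2S$, $n+2S-x_k$ and $2n-S+x_k$, the same residue table for $q\ge1$, and the same direct check of $n\le6$ giving $D(A)=2,4,6,7,9,11$.

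One small slip: the reason you give for discarding $2S$ (namely $S\le3q+3$ and $n\ge7q$) does not suffice when $q\le1$. The claim itself is true, since $S=3q+\lfloor s/2\rfloor$ gives $3S\le2n$. Alternatively, for $q\ge1$ one has $2S\le6q+6\le12q$, which is exactly what the paper's $2S$ column records.
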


\begin{proof}
The chosen values satisfy $0\le x_i\le n$, and every row of \eqref{eq:construction-matrix} has sum $n$, so the matrix is admissible and represents a three-colourable construction. For $q\ge1$ all the relevant cells below are nonempty. The only candidate values in \eqref{eq:D-def} are
\[
2S,\qquad n+2S-x_i,\qquad 2n-S+x_i.
\]
Thus the second family is maximized at $\min_i x_i$ and the third at $\max_i x_i$. Direct substitution gives
\begin{center}
\begin{tabular}{c|c|c|c|c|c}
\toprule
$s$ & $2S$ & $n+2S-\min x_i$ & $2n-S+\max x_i$ & $D(A)$ & $\lceil12n/7\rceil$\\
\midrule
$0$ & $6q$   & $12q$   & $12q$   & $12q$   & $12q$\\
$1$ & $6q$   & $12q+1$ & $12q+2$ & $12q+2$ & $12q+2$\\
$2$ & $6q+2$ & $12q+4$ & $12q+4$ & $12q+4$ & $12q+4$\\
$3$ & $6q+2$ & $12q+5$ & $12q+6$ & $12q+6$ & $12q+6$\\
$4$ & $6q+4$ & $12q+8$ & $12q+7$ & $12q+8$ & $12q+7$\\
$5$ & $6q+4$ & $12q+9$ & $12q+9$ & $12q+9$ & $12q+9$\\
$6$ & $6q+6$ & $12q+11$& $12q+10$& $12q+11$& $12q+11$\\
\bottomrule
\end{tabular}
\end{center}
This proves the assertion for $q\ge1$. When $q=0$, some cells disappear and therefore must not contribute to the maximum in \eqref{eq:D-def}. Directly from the nonempty cells of \eqref{eq:construction-matrix}, for $n=1,\ldots,6$ one obtains
\[
D(A)=2,4,6,7,9,11,
\]
respectively, exactly the values $\lceil12n/7\rceil$. In particular, $n=4$ does not belong to the exceptional family.
\end{proof}

The exceptional residue class $n\equiv4\pmod7$, with $n\ge11$, is treated next; the one-unit deficit is unavoidable.

\subsection{The residue class \texorpdfstring{$n\equiv4\pmod7$}{n = 4 (mod 7)}}

\begin{lemma}\label{lem:residue-four}
Let $n=7q+4$ with $q\ge1$. Every admissible matrix $A$ satisfies
\[
D(A)\ge12q+8.
\]
\end{lemma}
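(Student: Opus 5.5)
The plan is to rerun the proof of \cref{prop:matrix-lower} with exact integer bookkeeping. By \cref{prop:matrix-lower}, $D(A)\ge\lceil 12n/7\rceil=12q+7$. So it suffices to assume $D(A)=D:=12q+7$ and derive a contradiction. Since $12n/7=12q+6+\tfrac67$, the slack in that argument is less than one unit, and I expect integrality to force every inequality in it to be tight.

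\emph{Step 1: column bounds with integrality.} If $k_j\ge3$, then $c_j\le D+\min_i a_{ij}\le D+c_j/k_j$. This gives $c_j\le 3D/2=18q+10.5$, hence $c_j\le18q+10$. For $k_j\ge4$ it gives the stronger bound $c_j\le 4D/3<18q+10$. If $k_j\le2$, then $c_j\le 2n=14q+8$. Now suppose $c_j=18q+10$. Then $k_j=3$, and every entry of column $j$ is at least $c_j-D=6q+3$. The three entries are therefore $6q+3,6q+3,6q+4$ in some order. In particular all of them are less than $n$, so each lies in a row that is not monochromatic. As in \cref{prop:matrix-lower}, not all rows are monochromatic, since otherwise $D\ge 2n$.

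\emph{Step 2: tightness of the counting.} Take any non-monochromatic row $i$ with colour set $C(i)$ and $s=|C(i)|$.
\begin{itemize}
\item If $s=3$, then $7n\le n+3D$. This reads $49q+28\le 43q+25$, which is false. So every row uses at most two colours.
\item If $s=2$, then $49q+28=7n\le n+2D+(18q+10)=49q+28$. So equality holds throughout.
\end{itemize}
Equality means two things. First, the missing colour $j\notin C(i)$ has $c_j=18q+10$. Second, $c_j=D+a_{ij}$ exactly for each $j\in C(i)$. Consequently, within each column, all entries lying in two-coloured rows equal the single value $c_j-D$.

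\emph{Step 3: forcing two big columns, then a contradiction.} Fix a two-coloured row $i$ with missing colour $j_0$, so $c_{j_0}=18q+10$. Column $j_0$ has three entries, each lying in a two-coloured row by Step 1. None of these rows can miss $j_0$, because they contain it. Hence each of them misses some other colour, and that colour is also big by Step 2. Thus at least two columns have sum $18q+10$. All three cannot be big, since $3(18q+10)>7n$. So exactly two columns are big, and the third has sum $7n-2(18q+10)=13q+8$. Now apply Step 2 to a big column $j$: every entry of column $j$ in a two-coloured row must equal $c_j-D=6q+3$. But Step 1 says that a big column contains an entry $6q+4$, and that entry lies in a two-coloured row. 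This is the contradiction, and so $D(A)\ge12q+8$.

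The main obstacle is Step 3. One must make sure that the tightness information from different two-coloured rows really pins each column to a single value, and that the ``missing colour is big'' conclusion applies to the rows supporting column $j_0$. The single-value claim in Step 2 uses the exact equality $c_j=D+a_{ij}$, which holds for every two-coloured row, not just the first one chosen. I expect the rest to be routine arithmetic. The hypothesis $q\ge1$ is used to ensure $6q+4<n$, so that the big-column entries genuinely sit in two-coloured rows.
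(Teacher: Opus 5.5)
Your proposal is correct and follows essentially the paper's route. You rerun the counting of \cref{prop:matrix-lower} at $D=12q+7$, exclude three-coloured rows, and use the forced equalities on a two-coloured row: the omitted column has sum exactly $18q+10$ on a three-row support, and $a_{ij}=c_j-D$ for the colours present. This gives an arithmetic contradiction in the omitted column, which the paper writes as $b(q+1)=1$ and you write as the entry $6q+4$ being forced to equal $6q+3$. Your ``exactly two big columns'' step is not needed (nor is the paper's split-type classification), because the contradiction already appears in the column $j_0$ omitted by any single two-coloured row.
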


\begin{proof}
Suppose for a contradiction that
\begin{equation}\label{eq:D-assume}
D(A)\le D_0:=12q+7.
\end{equation}
Write $c_1,c_2,c_3$ for the column sums.

First, no row can use all three colours. Indeed, if $a_{i1},a_{i2},a_{i3}>0$, then the definition of $D(A)$ gives
\[
a_{ij}\ge c_j-D_0\qquad(j=1,2,3).
\]
Using \eqref{eq:row-sum} and $c_1+c_2+c_3=7n$,
\[
n\ge7n-3D_0,
\]
so $D_0\ge2n=14q+8$, contradicting \eqref{eq:D-assume}.

Second, not every row can be monochromatic. If all seven rows were monochromatic, one colour would occur on at least three rows, and a vertex in one of those rows would have colour-defect at least $2n>D_0$. Hence at least one row is split between exactly two colours.

Fix any split row $i$, let its two colours be $p$ and $q'$, and let $\ell$ be the omitted colour. Then
\[
c_p-a_{ip}\le D_0,\qquad c_{q'}-a_{iq'}\le D_0,
\qquad a_{ip}+a_{iq'}=n,
\]
so
\begin{equation}\label{eq:omitted-lower}
c_\ell=7n-c_p-c_{q'}\ge6n-2D_0=18q+10.
\end{equation}
Let $k=|\supp(\ell)|$. If $k\le2$, then $c_\ell\le2n<18q+10$, impossible. If $k\ge3$, every positive entry in column $\ell$ is at least $c_\ell-D_0$, and hence
\[
c_\ell\ge k(c_\ell-D_0),
\qquad
c_\ell\le\frac{k}{k-1}D_0.
\]
If $k\ge4$, then
\[
c_\ell\le\frac43D_0=16q+\frac{28}{3}<18q+10,
\]
again contradicting \eqref{eq:omitted-lower}. Thus $k=3$, and integrality gives
\[
c_\ell\le\left\lfloor\frac32D_0\right\rfloor=18q+10.
\]
Consequently every split row has the following label-independent property:
\begin{equation}\label{eq:split-generic}
c_\ell=18q+10,\qquad |\supp(\ell)|=3,
\end{equation}
and equality holds in the two defect inequalities on that row,
\begin{equation}\label{eq:split-row-equality}
c_p-a_{ip}=c_{q'}-a_{iq'}=D_0.
\end{equation}

Classify split rows by the colour-pair types $12$, $13$, and $23$. It is impossible that only one split type occurs. Indeed, if only type $12$ occurred, then colour $3$ would occur only on monochromatic rows; by \eqref{eq:split-generic} its support would consist of exactly three such rows, forcing $c_3=3n$, contrary to $c_3=18q+10$. It is also impossible that all three split types occur, since the omitted colour in each type would force
\[
c_1=c_2=c_3=18q+10,
\]
whereas $3(18q+10)\ne7n$.

Therefore exactly two split types occur. Any two distinct pairs among $\{12,13,23\}$ share one colour, so after a single relabelling they may be taken to be $12$ and $13$. A $12$-split omits colour $3$, while a $13$-split omits colour $2$; hence \eqref{eq:split-generic} gives
\[
c_2=c_3=18q+10,
\qquad
c_1=13q+8.
\]
Moreover $|\supp(2)|=3$. Every non-monochromatic occurrence of colour $2$ lies in a $12$-split row, and by \eqref{eq:split-row-equality} its cell size is
\[
c_2-D_0=(18q+10)-(12q+7)=6q+3.
\]
Let $b$ be the number of monochromatic colour-$2$ rows. The three support rows therefore give
\[
18q+10=b(7q+4)+(3-b)(6q+3).
\]
After simplification,
\[
b(q+1)=1,
\]
which is impossible for $q\ge1$. This contradiction proves the lemma.
\end{proof}

Combining \cref{eq:LTZ-upper,prop:construction,lem:residue-four} gives the exact three-colourable extremum.

\begin{theorem}\label{thm:delta-exact}
For every $n\ge1$,
\[
\boxed{
\delta(n,7,3)=
\begin{cases}
\left\lfloor\dfrac{30n}{7}\right\rfloor-1,&n\equiv4\pmod7\text{ and }n\ge11,\\[4mm]
\left\lfloor\dfrac{30n}{7}\right\rfloor,&\text{otherwise}.
\end{cases}}
\]
\end{theorem}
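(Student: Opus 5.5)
The plan is to combine the matrix reformulation \eqref{eq:delta-matrix} with the three earlier bounds on $\min_A D(A)$. By \eqref{eq:delta-matrix}, $\delta(n,7,3)=6n-\min_A D(A)$. So it suffices to show that $\min_A D(A)$ equals $\lceil 12n/7\rceil+1$ when $n\equiv4\pmod7$ and $n\ge11$, and equals $\lceil12n/7\rceil$ otherwise. We then translate this into the stated form.

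\textbf{Upper bound on the minimum.} \Cref{prop:construction} exhibits an admissible matrix attaining exactly the claimed value in each case, so $\min_A D(A)$ is at most that value.

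\textbf{Lower bound on the minimum.} \Cref{prop:matrix-lower} gives $D(A)\ge\lceil12n/7\rceil$ for every admissible $A$, which settles all non-exceptional $n$. For $n=7q+4$ with $q\ge1$, \cref{lem:residue-four} gives $D(A)\ge12q+8$. We need to identify this with $\lceil12n/7\rceil+1$. Since $12n/7=12q+48/7=12q+6+6/7$, we have $\lceil12n/7\rceil=12q+7$, as required.

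\textbf{Translating to floors.} The remaining step is the identity
\[
6n-\left\lceil\frac{12n}{7}\right\rceil=\left\lfloor 6n-\frac{12n}{7}\right\rfloor=\left\lfloor\frac{30n}{7}\right\rfloor,
\]
which holds because $6n$ is an integer and $-\lceil x\rceil=\lfloor -x\rfloor$. Subtracting the extra $1$ in the exceptional class gives the boxed formula.

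No step is a genuine obstacle, since all the substantive work lies in the cited results. The only care needed is the boundary case $n=4$ (that is, $q=0$). There \cref{prop:construction} already gives $D(A)=7=\lceil48/7\rceil$, and \cref{lem:residue-four} is not invoked. This is consistent with the restriction $n\ge11$ in the statement.
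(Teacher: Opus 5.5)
Your proposal is correct and is essentially the paper's own argument. The paper likewise obtains the theorem by combining the matrix reformulation \eqref{eq:delta-matrix} with the general bound \eqref{eq:LTZ-upper} (equivalently \cref{prop:matrix-lower}), the construction of \cref{prop:construction}, and \cref{lem:residue-four} for $n=7q+4$, $q\ge1$; your checks that $\lceil 12n/7\rceil=12q+7$ in that class, that $6n-\lceil 12n/7\rceil=\lfloor 30n/7\rfloor$, and that $n=4$ is handled by the $q=0$ case of the construction supply exactly the bookkeeping the paper leaves implicit.
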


\section{A three-colourability threshold for balanced \texorpdfstring{$7$-partite $K_4$-free}{7-partite K4-free} graphs}\label{sec:structural}

The proof uses three classical structural results. The first is due to Goddard and Lyle~\cite{GoddardLyle2011}.

\begin{theorem}[Goddard--Lyle]\label{thm:GL}
Let $H$ be a maximal $K_4$-free graph on $N$ vertices. If
\[
\delta(H)>\frac35N,
\]
then $H$ is the join of an independent set and a triangle-free graph $\Gamma$, where moreover $\delta(\Gamma)>|\Gamma|/3$.
\end{theorem}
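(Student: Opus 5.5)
The plan is to locate the independent set first and then derive the degree bound on $\Gamma$ by counting. Let $H$ be maximal $K_4$-free on $N$ vertices with $\delta(H)>\tfrac35N$. Two consequences of the hypotheses drive everything.
\begin{itemize}
\item[(i)] For every vertex $v$, the neighbourhood $N(v)$ induces a triangle-free graph, since a triangle in $N(v)$ together with $v$ would be a $K_4$.
\item[(ii)] By maximality, for every non-edge $uw$ the common neighbourhood $N(u)\cap N(w)$ contains an edge.
\end{itemize}
Since $\delta(H)>N/2$, $H$ contains a triangle, and in fact every edge lies in a triangle: for adjacent $x,y$ we have $|N(x)\cap N(y)|\ge d(x)+d(y)-N>N/5>0$. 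I would fix a triangle $xyz$. No vertex is adjacent to all three of $x,y,z$, so every vertex sees at most two of them. From $d(x)+d(y)+d(z)>\tfrac95N$, more than $\tfrac45N$ of the vertices are adjacent to exactly two of $x,y,z$. This splits most of $V(H)$ into the three classes $A_x,A_y,A_z$, where $A_x$ consists of the vertices adjacent to $y,z$ but not to $x$, and similarly for $A_y$ and $A_z$. Each of these classes lies in a common neighbourhood of an edge of the triangle, so each is independent. Any vertex $a\in A_x$ together with $y,z$ forms a triangle, so the same analysis can be repeated from any triangle.

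The core step, and the one I expect to be the main obstacle, is to show that one of these classes can be enlarged to an independent set $I$ that is complete to $V(H)\setminus I$, and that $H-I$ is triangle-free. I would first pass to twin classes: say nonadjacent $u,w$ are twins if $N(u)=N(w)$. I would try to show, using (ii) together with the degree bound, that any two nonadjacent vertices of large degree in a common triangle configuration are twins. The heuristic is that if $N(u)\ne N(w)$, then some vertex in $N(u)\setminus N(w)$, together with an edge in $N(u)\cap N(w)$, produces either a $K_4$ or a vertex of degree at most $\tfrac35N$.

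Next I would choose the twin class $I$ of maximum size, and argue that every vertex outside $I$ is adjacent to all of $I$. Suppose some $v\notin I$ misses some $u\in I$. Then (ii) supplies an edge $ab\subseteq N(u)\cap N(v)$. I would then count the common neighbourhoods of $u,a,b$ against $\tfrac35N$ to reach a contradiction. Once $I$ is complete to the rest, the graph $\Gamma:=H-I$ is triangle-free, because a triangle in $\Gamma$ plus any vertex of $I$ is a $K_4$. If this direct route stalls, I would instead apply (i) to a vertex $u\in I$: the graph $H[N(u)]=\Gamma$ is triangle-free with large minimum degree. By Andrásfai-type results it is then bipartite, so $H$ is close to $3$-partite, and I would fall back to a stability-style argument to pin down $I$.

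The final degree bound is then routine. Any $u\in I$ has $N(u)=V(\Gamma)$, so $|\Gamma|=d(u)>\tfrac35N$. Any $w\in\Gamma$ satisfies $d_H(w)=|I|+d_\Gamma(w)>\tfrac35N$. Using $|I|=N-|\Gamma|$, this gives
\[
d_\Gamma(w)>|\Gamma|-\tfrac25N.
\]
Since $N<\tfrac53|\Gamma|$, we have $\tfrac25N<\tfrac23|\Gamma|$, and hence $\delta(\Gamma)>|\Gamma|/3$, which completes the plan.
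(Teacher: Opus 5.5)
\textbf{Gap: the core step is not proved.} The paper gives no proof of this statement; it is quoted from Goddard and Lyle. So your proposal has to supply the whole argument, and it does not. What you do prove is correct: (i), (ii), every edge lying in a triangle, the independence of $A_x,A_y,A_z$, and the derivation of $\delta(\Gamma)>|\Gamma|/3$ from the join structure. But the existence of an independent set $I$ complete to $V(H)\setminus I$ with $H-I$ triangle-free is the entire content of the theorem, and for it you give only heuristics.

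\textbf{The twin heuristic is false.} Read naturally (two nonadjacent vertices in the same class are twins), it fails on the following example, which the paper itself uses in Section~3. Take $8\mid N$ and $H=I\vee\Gamma$ with $|I|=3N/8$ and $\Gamma$ a balanced blow-up of $C_5$ with fibres $B_0,\dots,B_4$. This $H$ is maximal $K_4$-free with $\delta(H)=5N/8>\frac35N$. For the triangle $x\in I$, $y\in B_0$, $z\in B_1$ you get $A_x=I$, $A_y=B_0\cup B_2$ and $A_z=B_1\cup B_4$. Vertices $u\in B_0$ and $w\in B_2$ are nonadjacent and in the same class, yet $N(u)=I\cup B_1\cup B_4\ne I\cup B_1\cup B_3=N(w)$. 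Here $N(u)\setminus N(w)=B_4$, the edges of $N(u)\cap N(w)$ run between $I$ and $B_1$, and there is neither a $K_4$ nor a vertex of degree at most $\frac35N$. So two of the three classes are not of the required form. The maximum-twin-class choice and the promised count over $u,a,b$ are never backed by an argument.

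\textbf{The fallback fails too.} It is circular: $H[N(u)]=\Gamma$ for $u\in I$ presupposes that $I$ is complete to the rest. Its key claim is also false. The only available bound is $\delta(\Gamma)>|\Gamma|/3$, below the Andr\'asfai--Erd\H{o}s--S\'os threshold $\frac25$. In the example above $\Gamma$ is non-bipartite and $\chi(H)=4$, so $H$ need not be close to $3$-partite; the paper's application of this theorem relies on exactly that non-bipartite case.

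\textbf{A reduction that isolates the missing idea.} It suffices to find one vertex $v$ whose non-neighbourhood $I:=V(H)\setminus N(v)$ is independent. Then $H[N(v)]$ is triangle-free by (i). Suppose $u\in I$ and $w\in N(v)$ were nonadjacent. By (ii) there is an edge $ab$ in $N(u)\cap N(w)$. Since $I$ is independent, $a,b\in N(v)$, so $\{v,w,a,b\}$ would be a $K_4$. Hence completeness comes free from maximality, and the theorem is equivalent to the existence of such a vertex. Proving that existence from $\delta(H)>\frac35N$ is exactly what your proposal lacks. In the example only the vertices of $I$ qualify, so the argument must single out the correct class of a triangle by a global property, not by local twin comparisons.
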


\begin{theorem}[Andra\'sfai--Erd\H{o}s--S\'os~\cite{AES1974}]\label{thm:AES}
If a triangle-free graph $H$ on $N$ vertices satisfies
\[
\delta(H)>\frac25N,
\]
then $H$ is bipartite.
\end{theorem}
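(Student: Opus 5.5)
I would prove \cref{thm:AES} by the classical shortest-odd-cycle counting argument. Suppose, for a contradiction, that $H$ is triangle-free with $\delta(H)>\frac25N$ but not bipartite. Then $H$ contains an odd cycle. Fix a shortest one, $C$, of length $2k+1$. Since $H$ is triangle-free, $2k+1\ge5$.

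The plan is to double count the edges between $V(C)$ and $V(H)$. First, $C$ is an induced cycle: a chord would split $C$ into two shorter cycles, one of which is odd. Hence each vertex of $C$ has exactly $2$ neighbours on $C$.

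The key structural step is that every vertex $v\notin V(C)$ also has at most $2$ neighbours on $C$. Suppose $v$ has $m\ge3$ neighbours on $C$. They cut $C$ into $m$ arcs of lengths $\ell_1,\dots,m$, more precisely $\ell_1,\dots,\ell_m$, with $\sum\ell_i=2k+1$. Triangle-freeness forbids two consecutive neighbours of $v$ from being adjacent, so every $\ell_i\ge2$. Since the total is odd, some $\ell_i$ is odd. That arc together with the two edges from $v$ to its endpoints forms an odd closed walk of length $\ell_i+2$, which contains an odd cycle of length at most $\ell_i+2$. Because $m\ge3$ and the other arcs each have length at least $2$, we get $\ell_i\le 2k+1-4$. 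So this odd cycle has length at most $2k-1<2k+1$, contradicting the minimality of $C$. I expect this step to be the only one requiring care: the parity and minimality bookkeeping, and using $\ell_i\ge2$ to keep the new cycle strictly shorter.

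With both facts, every vertex of $H$ sends at most $2$ edges to $C$, so
\[
(2k+1)\cdot\tfrac25N<\sum_{u\in V(C)}\deg(u)=e\bigl(V(C),V(H)\bigr)\le 2N.
\]
This gives $2k+1<5$, contradicting $2k+1\ge5$. Hence $H$ has no odd cycle and is bipartite.
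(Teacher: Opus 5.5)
Your argument is correct. It is the standard shortest-odd-cycle proof, and each step checks out:
\begin{itemize}
\item a chord of a shortest odd cycle would produce a shorter odd cycle, so $C$ is induced;
\item a vertex off $C$ with $m\ge3$ neighbours on $C$ has all arcs of length at least $2$, so an odd arc has length at most $2k-3$ and closes up through $v$ to an odd cycle of length at most $2k-1$;
\item the double count then gives $(2k+1)\,\delta(H)\le 2N$.
\end{itemize}

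The paper, by contrast, gives no proof of this statement. It simply quotes it from Andr\'asfai, Erd\H{o}s and S\'os, whose theorem is the general $K_r$-free result ($\delta>\frac{3r-7}{3r-4}N$ forces $(r-1)$-colourability); the triangle-free statement is its case $r=3$.

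What your route buys is a short, self-contained and purely finite proof. This directly backs the paper's remark that no ``sufficiently large $N$'' hypothesis is involved. It also yields the slightly sharper fact that a non-bipartite triangle-free graph of odd girth $g$ has $\delta(H)\le 2N/g\le\frac25N$. That contrapositive form is exactly how the result is used in the proof of \cref{lem:threshold} to get $(d-a)n\le\frac25(7-a)n$. The citation, in turn, places the statement inside the general $K_r$-free family, which your argument does not address.

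The stray ``$\ell_1,\dots,m$'' in your write-up should simply be deleted.
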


\begin{theorem}[H\"aggkvist~\cite{Haggkvist1982}]\label{thm:Haggkvist}
If a triangle-free graph $H$ on $N$ vertices satisfies
\[
\delta(H)>\frac38N,
\]
then $H$ is homomorphic to $C_5$.
\end{theorem}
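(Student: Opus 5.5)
Since the paper cites this as a classical result, I sketch how I would reprove it rather than rely on the reference.

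\emph{Step 1 (short odd cycle).} If $H$ is bipartite it maps onto a single edge of $C_5$, so assume it is not. Let $C$ be a shortest odd cycle, of length $2k+1$. Minimality implies that every vertex outside $C$ has at most two neighbours on $C$ when $k\ge3$. Each vertex of $C$ has exactly two neighbours on $C$, since a chord would create a shorter odd cycle. Summing degrees over $V(C)$ counts every vertex of $H$ at most twice, so $(2k+1)\delta(H)\le 2N$. For $k\ge3$ this gives $\delta(H)\le 2N/7<3N/8$, a contradiction. Since $H$ is triangle-free, $k\ge2$, and therefore $H$ contains an induced $5$-cycle $C=v_1\cdots v_5$.

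\emph{Step 2 (classes).} Triangle-freeness forces every vertex to have at most two neighbours on $C$, and if it has two they are non-adjacent, so they are of the form $\{v_{i-1},v_{i+1}\}$. Let $A_i$ be the set of vertices whose neighbourhood on $C$ is exactly $\{v_{i-1},v_{i+1}\}$; note $v_i\in A_i$. Each $A_i$ is independent, because its vertices share the neighbour $v_{i+1}$. There are no edges between $A_i$ and $A_{i\pm2}$, because such a pair would share a neighbour on $C$ (for instance $A_i$ and $A_{i+2}$ share $v_{i+1}$). Hence $x\mapsto i$ for $x\in A_i$ is a homomorphism to $C_5$ on $\bigcup_iA_i$. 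Moreover $\sum_i d(v_i)>15N/8$, while each vertex contributes at most $2$ to this sum. So the set $R$ of vertices with at most one neighbour on $C$ satisfies $|R|<N/8$.

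\emph{Step 3 (leftover vertices --- the main obstacle).} It remains to place the vertices of $R$, and this is where the constant $3/8$ is really needed. My first attempt is to choose $C$ among all induced $5$-cycles so as to maximise $|\bigcup_iA_i|$, or equivalently $\sum_i d(v_i)$. Take $x\in R$. Its more than $3N/8$ neighbours lie almost entirely in $\bigcup A_i$, since $|R|<N/8$. Triangle-freeness forbids $x$ from having neighbours in two adjacent classes $A_i,A_{i+1}$ that are themselves adjacent to each other. Using $\delta>3N/8$ once more, the sets $A_i$ are large and the bipartite graphs between consecutive classes are dense. From this I would show that $N(x)$ essentially lies in $A_{j-1}\cup A_{j+1}$ for some $j$. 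Then either $x$ can be added to $A_j$ consistently, or one can rebuild a $5$-cycle through $x$ that enlarges $\bigcup A_i$, contradicting maximality.

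The delicate part is turning the counting into a statement that holds for every vertex rather than for most vertices. I expect this to require an iterative reassignment: place the vertices of $R$ one at a time into the class dictated by their neighbourhood. At each stage, check via degree counting that no forbidden edge between $A_i$ and $A_{i\pm2}$, and no edge inside a class, appears.
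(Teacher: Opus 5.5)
\textbf{There is a genuine gap: your Step 3 is the whole content of the theorem, and you give a plan for it rather than an argument.}

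Steps 1 and 2 are correct. The double count over a shortest odd cycle yields an induced $C_5$, the classes $A_i$ are independent with edges only between consecutive classes, and $|R|<N/8$. (The paper itself does not prove this statement; it only cites H\"aggkvist.)

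Step 3 asserts several things without proof:
\begin{itemize}
\item that the $A_i$ are large and consecutive classes are densely joined;
\item that $N(x)$ ``essentially'' lies in some $A_{j-1}\cup A_{j+1}$.
\end{itemize}
Even if these were proved, they are approximate statements. Extending the map to $x\in R$ requires $N(x)\subseteq A_{j-1}\cup A_{j+1}$ exactly, including the neighbours of $x$ that lie in $R$ itself. Your one-at-a-time reassignment has no invariant guaranteeing that no edge ends up inside a class or between classes $j$ and $j\pm2$. The alternative of rebuilding a larger $5$-cycle through $x$ is only asserted. Even ``almost entirely'' overstates Step 2: $|R|<N/8$ still allows up to a third of $N(x)$ to lie in $R$.

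Approximate counting of this kind cannot work, because the theorem is sharp. Take the balanced blow-up of the Wagner graph $V_8$, the Cayley graph on $\mathbb{Z}_8$ with connection set $\{\pm1,4\}$. It is triangle-free with $\delta=3N/8$. It is not homomorphic to $C_5$, since its fractional chromatic number is $8/3>5/2$. Choose $v_1,\dots,v_5$ in the blobs of $0,1,2,3,7$. Then:
\begin{itemize}
\item the blobs of $4$ and $6$ fall into $A_5$ and $A_4$;
\item the blob of $5$ is exactly $R$, of size $N/8$;
\item each vertex of that blob has neighbours in $A_2$, $A_4$ and $A_5$, so no $A_{j-1}\cup A_{j+1}$ contains its neighbourhood.
\end{itemize}
So the strict inequality $\delta>3N/8$ has to enter through an exact count whose equality case is this configuration, and your sketch contains no such count.

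A natural way to set this up is as follows.
\begin{itemize}
\item Replace $H$ by a maximal triangle-free supergraph on the same vertex set. Degrees only increase, and a $C_5$-homomorphism of the supergraph restricts to $H$.
\item Now nonadjacent vertices have common neighbours, and it suffices to prove $R=\emptyset$. After that, $x\mapsto i$ is already a homomorphism and no reassignment is needed.
\item To exclude $z\in R$, build configurations from common neighbours. For instance, suppose $N(z)\cap C=\{v_1\}$, the common neighbour $y$ of $z,v_3$ lies in $A_4$, and the common neighbour $y'$ of $z,v_4$ lies in $A_3$. Then $C\cup\{z,y,y'\}$ spans a copy of $V_8$.
\item Every vertex of $H$ sees an independent set of that copy, hence at most $\alpha(V_8)=3$ of the eight vertices. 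This gives $8\delta(H)\le 3N$, exactly as in your Step 1, a contradiction.
\end{itemize}
The remaining cases are where the real work lies: $y$ or $y'$ lies in $R$, or $z$ has no neighbour on $C$. Your proposal does not address them.
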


The formulations above are finite statements; no ``sufficiently large $N$'' hypothesis is being used. The numerical scale also explains why the original seven-part structure must be exploited. For a general $K_4$-free graph on $N$ vertices (when the indicated divisibilities hold), the join of an independent set of size $3N/8$ with a balanced blow-up of $C_5$ on the remaining $5N/8$ vertices is $4$-chromatic and has minimum degree $5N/8$. When $N=7n$, this is $35n/8=4.375n$. Moreover, Goddard and Lyle~\cite{GoddardLyle2011} obtain a homomorphism to a $5$-wheel or a triangle only above
\[
\frac{8}{13}N=\frac{56}{13}n\approx4.3077n.
\]
Both values exceed the scale $30n/7\approx4.2857n$ needed for the present extremal problem. The counting below uses the seven original parts to pass below these general thresholds.

The following lemma is the main structural ingredient.

\begin{lemma}\label{lem:threshold}
Let $G$ be a balanced $7$-partite $K_4$-free graph with original parts $V_1,\ldots,V_7$, each of size $n$. If
\[
\delta(G)>\frac{132}{31}n,
\]
then
\[
\chi(G)\le3.
\]
\end{lemma}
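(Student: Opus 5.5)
We first reduce to the saturated case. Adding edges between distinct original parts that create no $K_4$ preserves the hypotheses and the conclusion we need, since three-colourability of a supergraph on the same vertex set gives three-colourability of $G$. So we may assume $G$ is maximal $K_4$-free subject to the $7$-partite constraint. With $N=7n$ we have $\delta(G)>\frac{132}{31}n=\frac{132}{217}N$, and $\frac{132}{217}\approx0.6083>\frac35$.

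The obstacle is that \cref{thm:GL} is stated for maximal $K_4$-free graphs, whereas $G$ is only maximal among $7$-partite graphs. We therefore work with a maximal $K_4$-free supergraph $H\supseteq G$ on the same vertex set, which still satisfies $\delta(H)>\frac35N$. Then \cref{thm:GL} gives $V=I\cup \Gamma$, with $I$ independent in $H$, $H$ the join of $I$ and $\Gamma$, $\Gamma$ triangle-free in $H$, and $\delta(H[\Gamma])>|\Gamma|/3$.

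Since $I$ and $\Gamma$ are independent and triangle-free in $H$, they are also so in $G$. It therefore suffices to show that $G[\Gamma]$ is bipartite, because then $I$ together with the two sides gives a proper $3$-colouring of $G$.

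Let $m=|\Gamma|$ and $|I|=7n-m$. A vertex $v\in I$ has $G$-neighbours only in $\Gamma$, and not in its own part. Hence $\delta(G)\le m-|\Gamma\cap V_{i(v)}|$.

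A vertex $u\in\Gamma$ satisfies
\[
\deg_G(u)\le |I\setminus V_{i(u)}|+\deg_{G[\Gamma]}(u),
\]
which gives
\[
\deg_{G[\Gamma]}(u)>\tfrac{132}{31}n-(7n-m)+|I\cap V_{i(u)}|.
\]
The plan is to feed these bounds into \cref{thm:AES} applied to $G[\Gamma]$: if $\delta(G[\Gamma])>\frac25m$ we are done. This holds as long as $m$ is not too large relative to $n$, because the degree lower bound $m-(7n-\frac{132}{31}n)$ exceeds $\frac25m$ exactly when $\frac35m>\frac{85}{31}n$, that is, $m>\frac{425}{93}n$.

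The remaining obstacle is the regime where $m\le\frac{425}{93}n$, equivalently $|I|\ge\frac{226}{93}n$. Here we use the part structure. The $I$-vertices lie in certain parts. If some part $V_i$ contains a positive amount of $I$, then vertices of $I\cap V_i$ need $m-|\Gamma\cap V_i|>\frac{132}{31}n$. Meanwhile $\Gamma$-vertices in parts with large $|I\cap V_i|$ gain extra degree inside $\Gamma$.

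In this regime I would apply \cref{thm:Haggkvist} to $G[\Gamma]$, whose minimum degree should exceed $\frac38m$, to get a homomorphism $\phi\colon G[\Gamma]\to C_5$. The five classes $X_1,\dots,X_5$ then satisfy $|X_k|+|X_{k+2}|$-type degree constraints: a vertex in $X_k$ has all its $\Gamma$-neighbours in $X_{k-1}\cup X_{k+1}$.

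Combine these constraints with the per-part bounds on $|I\cap V_i|$ and $|\Gamma\cap V_i|$, summing over the seven parts. The aim is to show that some class $X_k$ must be empty, or that its vertices can be recoloured, in which case $G[\Gamma]$ is bipartite. Otherwise, summing $\deg(v)$ over one vertex from each of several classes in a common part gives a linear inequality contradicting $\delta(G)>\frac{132}{31}n$.

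I expect this final weighted counting to be the hard step. It requires the right choice of test vertices, one per $C_5$-class, chosen in parts that minimise $|I\cap V_i|$. It also requires an LP-style averaging, with weights giving the constant $132/31$, which is presumably the extremal point of exactly this averaging.
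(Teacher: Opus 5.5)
Your set-up is the same as the paper's: a maximal $K_4$-free supergraph, the Goddard--Lyle splitting $V=I\sqcup\Gamma$, reduction to bipartiteness of $G[\Gamma]$, and the Andr\'asfai--Erd\H{o}s--S\'os step leaving the regime $|I|\ge\frac{226}{93}n$. After that the proposal is a plan rather than a proof, and two essential ingredients are missing.

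\textbf{The H\"aggkvist step is asserted, not proved.} You only say the minimum degree ``should exceed'' $\frac38 m$. Your bound $\delta(G[\Gamma])\ge m-\frac{85}{31}n$ beats $\frac38 m$ only when $|I|<\frac{81}{31}n$. So you need an \emph{upper} bound on $|I|$, which you never derive. It follows from the inequality you already wrote for $I$-vertices:
\begin{itemize}
\item With $Q:=6-\delta(G)/n$, every part meeting $I$ has $|I\cap V_i|\ge|I|-Qn$.
\item Since $|I|>2n$, the set $I$ meets $s\ge3$ parts.
\item Summing gives $|I|\le\frac{s}{s-1}Qn\le\frac32Qn<\frac{81}{31}n$.
\end{itemize}
Put $a:=|I|/n$ and $d:=\delta(G)/n$. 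This gives $\delta(G[\Gamma])/|\Gamma|\ge\frac{5d-18}{3d-4}$, which exceeds $\frac38$ exactly when $31d>132$. So $132/31$ is the H\"aggkvist crossing point, not the extremal point of a final averaging as you guess.

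\textbf{The concluding step is missing its key idea.} The paper proves that no original part meets both $I$ and $\Gamma$, and then uses integrality. The argument runs as follows.
\begin{itemize}
\item Since $G[\Gamma]$ is non-bipartite, all five fibres $B_0,\dots,B_4$ of the $C_5$-homomorphism are nonempty.
\item Put $x_i=|B_i|/n$ and $t_i=x_i+x_{i+2}+x_{i+3}$; these are the fibres sending no edges to $B_i$. Let $t_{ij}$ be the same quantity restricted to $V_j$.
\item A vertex of $B_i\cap V_j$ gives $t_i\le Q+t_{ij}\le Q+1$.
\item If $V_j$ is mixed, then $|I\cap V_j|\ge(a-Q)n$ caps the $\Gamma$-mass of $V_j$. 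So a fibre meeting $V_j$ has $t_i\le 1-a+2Q$.
\item Each $x_k$ lies in three of the $t_i$, so $\sum_i t_i=3(7-a)$. Hence $a\ge 8-3Q$.
\item Together with $a\le\frac32Q$ this needs $Q\ge\frac{16}{9}$, while $Q<\frac{54}{31}$. So no part is mixed.
\end{itemize}
The gain comes from a test vertex in a part where $|I\cap V_j|$ is \emph{large}, which is the opposite of your heuristic of minimising $|I\cap V_i|$.

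It follows that $I$ is a union of whole parts. Then $a$ is an integer with $2<a\le\frac32Q<3$, a contradiction. This last step is a discreteness argument, not a single linear degree inequality. Your sketch of summing degrees over test vertices does not identify the mixed/unmixed dichotomy that actually closes the proof.
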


\begin{proof}
Assume for a contradiction that $\chi(G)\ge4$. We first obtain a decomposition $V(G)=A\sqcup B$, then use a $C_5$-homomorphism of $G[B]$ to show that no original part can meet both $A$ and $B$. This will force $a:=|A|/n$ to be an integer while the degree bounds give $2<a<3$.

Put
\[
d:=\frac{\delta(G)}n,
\qquad
Q:=6-d.
\]
Thus
\begin{equation}\label{eq:d-threshold}
d>\frac{132}{31}.
\end{equation}

Let $G^*$ be a maximal $K_4$-free supergraph of $G$ on the same vertex set, maximal among all graphs on that vertex set. In particular, $G^*$ is not required to remain $7$-partite and may contain edges inside the original parts; no compatibility with the original seven-part partition is imposed or used in this extension. Since
\[
\delta(G^*)\ge\delta(G)>\frac{132}{31}n>\frac35(7n),
\]
\cref{thm:GL} gives a partition
\[
V(G)=A\sqcup B
\]
such that $A$ is independent in $G^*$ and $G^*[B]$ is triangle-free. Therefore $A$ is also independent in $G$ and $G[B]$ is triangle-free. If $G[B]$ were bipartite (including the degenerate case $B=\varnothing$), then two colours on $B$ together with a third colour on $A$ would give $\chi(G)\le3$, contrary to assumption. Thus $B\ne\varnothing$ and $G[B]$ is non-bipartite. The additional conclusion $\delta(G^*[B])>|B|/3$ from \cref{thm:GL} is not needed below.

Normalize sizes by $n$:
\[
a:=\frac{|A|}{n},
\qquad
a_i:=\frac{|A\cap V_i|}{n}.
\]
Then $|B|=(7-a)n$.

For $x\in B\cap V_i$, the number of its possible neighbours in $A$ is at most $(a-a_i)n\le an$, hence
\[
d_{G[B]}(x)\ge(d-a)n.
\]
Since $G[B]$ is triangle-free and non-bipartite, \cref{thm:AES} implies
\[
(d-a)n\le\frac25(7-a)n.
\]
Therefore
\begin{equation}\label{eq:a-lower}
a\ge\frac{5d-14}{3}>\frac{226}{93}>2.
\end{equation}
In particular $A\ne\varnothing$ and $A$ meets at least three of the original parts.

Now take $x\in A\cap V_i$. Since $A$ is independent and $G$ is $7$-partite, all neighbours of $x$ lie in
\[
B\setminus V_i,
\]
whose normalized size is
\[
(7-a)-(1-a_i)=6-a+a_i.
\]
Hence
\[
d\le6-a+a_i,
\]
or equivalently
\begin{equation}\label{eq:a-row}
a-a_i\le Q
\end{equation}
for every $i$ with $a_i>0$.
If $A$ meets $s\ge3$ original parts, \eqref{eq:a-row} gives $a_i\ge a-Q$ on each of those parts. Summing over the $s$ parts gives
\[
a\ge s(a-Q),
\]
and therefore
\begin{equation}\label{eq:a-upper}
a\le\frac{s}{s-1}Q\le\frac32Q.
\end{equation}

The function $(d-a)/(7-a)$ is decreasing in $a$ because $d<7$. Using \eqref{eq:a-upper},
\begin{align}
\frac{\delta(G[B])}{|B|}
&\ge\frac{d-a}{7-a}\notag\\
&\ge\frac{d-\frac32(6-d)}{7-\frac32(6-d)}
=\frac{5d-18}{3d-4}.\label{eq:B-ratio}
\end{align}
The inequality
\[
\frac{5d-18}{3d-4}>\frac38
\]
is equivalent to $31d>132$, which holds by \eqref{eq:d-threshold}. Thus \cref{thm:Haggkvist} gives a homomorphism
\[
\varphi:G[B]\to C_5.
\]
Because $G[B]$ is non-bipartite, the image of $\varphi$ cannot be a proper subgraph of $C_5$; every proper subgraph of $C_5$ is bipartite. Consequently all five fibres are nonempty. Write
\[
B_0,\ldots,B_4
\]
for the fibres, indexed modulo $5$, and set
\[
x_i:=\frac{|B_i|}{n},
\qquad
b_{ij}:=\frac{|B_i\cap V_j|}{n}.
\]
For $i\in\mathbb Z/5\mathbb Z$, define
\[
t_i:=x_i+x_{i+2}+x_{i+3},
\qquad
t_{ij}:=b_{ij}+b_{i+2,j}+b_{i+3,j}.
\]
Vertices in $B_i$ have no neighbours in $B_i\cup B_{i+2}\cup B_{i+3}$. Thus, if $b_{ij}>0$, then among the $6n$ vertices outside $V_j$, at least $(t_i-t_{ij})n$ lie in these three fibres and are nonadjacent to a vertex of $B_i\cap V_j$. Possible further non-neighbours in $A$ are discarded from this upper-bound calculation. Hence
\begin{equation}\label{eq:t-local}
t_i-t_{ij}\le Q.
\end{equation}
As $B_i$ is nonempty, choose $j$ with $b_{ij}>0$. Since $t_{ij}\le1$, \eqref{eq:t-local} gives
\begin{equation}\label{eq:t-upper}
t_i\le Q+1
\qquad(i\in\mathbb Z/5\mathbb Z).
\end{equation}

Suppose some original part $V_j$ meets both $A$ and $B$. Choose $i$ with $b_{ij}>0$. From \eqref{eq:a-row},
\[
a_j\ge a-Q,
\]
so the total normalized $B$-mass in $V_j$ is at most
\[
1-a_j\le1-a+Q.
\]
Hence $t_{ij}\le1-a+Q$, and \eqref{eq:t-local} yields
\begin{equation}\label{eq:t-mixed}
t_i\le1-a+2Q.
\end{equation}
Each $x_k$ occurs in exactly three of the five numbers $t_i$, so
\[
\sum_{i=0}^4t_i=3\sum_{i=0}^4x_i=3(7-a).
\]
Using \eqref{eq:t-mixed} for the selected $i$ and \eqref{eq:t-upper} for the other four indices,
\[
3(7-a)\le(1-a+2Q)+4(Q+1).
\]
Thus
\begin{equation}\label{eq:a-mixed-lower}
a\ge8-3Q.
\end{equation}
Together with \eqref{eq:a-upper}, this implies
\[
8-3Q\le\frac32Q,
\qquad
Q\ge\frac{16}{9}.
\]
But \eqref{eq:d-threshold} gives
\[
Q=6-d<6-\frac{132}{31}=\frac{54}{31}<\frac{16}{9},
\]
a contradiction. Therefore no original part mixes $A$ and $B$.

It follows that every original part meeting $A$ lies entirely in $A$. Hence $a$ is an integer. By \eqref{eq:a-lower}, $a>2$, so $a\ge3$. On the other hand, \eqref{eq:a-upper} and \eqref{eq:d-threshold} give
\[
a\le\frac32Q<\frac32\cdot\frac{54}{31}=\frac{81}{31}<3,
\]
again a contradiction. Therefore $\chi(G)\le3$.
\end{proof}

\begin{remark}[Bounds on the optimal threshold]\label{rem:threshold-range}
Let $c_*$ be the infimum of the real numbers $c$ such that, for every $n\ge1$, every balanced $7$-partite $K_4$-free graph $G$ with parts of size $n$ and $\delta(G)>cn$ is three-colourable. A balanced $7$-partite counterexample at degree $4n$ is obtained by taking two original parts with no edges between them as an independent set $A$, taking the remaining five original parts as the five fibres of a $C_5$ blow-up, and joining $A$ completely to those five parts. The resulting graph is $K_4$-free, has chromatic number $4$, and has minimum degree $4n$, so $c_*\ge4$. On the other hand, \cref{lem:threshold} shows that $132/31$ is sufficient. Therefore
\[
4\le c_*\le\frac{132}{31}.
\]
Within the present proof, $132/31$ arises exactly when the lower bound in \eqref{eq:B-ratio} crosses H\"aggkvist's $3/8$ threshold. Once a five-fibre $C_5$ homomorphism is available, the later mixed-part contradiction only requires $Q<16/9$, equivalently $d>38/9$, while the Goddard--Lyle reduction itself requires only $d>21/5$. One possible route to improvement is a lower-threshold structural theorem that still provides enough controlled fibre structure for the counting argument. The $10/29$ triangle-free three-colourability theory of Jin~\cite{Jin1993} does not by itself provide a $C_5$ homomorphism, so it cannot simply be substituted for H\"aggkvist's theorem in the present proof.
\end{remark}

\section{Proof of the main theorem}\label{sec:main-proof}

Set
\[
M(n):=\delta(n,7,3).
\]
Its exact value is given by \cref{thm:delta-exact}.

\begin{lemma}\label{lem:threshold-gap}
For every $n\ge1$,
\[
M(n)+1>\frac{132}{31}n.
\]
\end{lemma}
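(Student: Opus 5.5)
We will use \cref{thm:delta-exact}, which gives $M(n)\ge\lfloor 30n/7\rfloor-1$ in every case. It is therefore enough to show
\[
\left\lfloor\frac{30n}{7}\right\rfloor>\frac{132}{31}n .
\]
The constant gap is what makes this work:
\[
\frac{30}{7}-\frac{132}{31}=\frac{930-924}{217}=\frac{6}{217}>0.
\]

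Write $n=7q+s$ with $0\le s\le6$. Then $\lfloor30n/7\rfloor=30q+\lfloor 30s/7\rfloor$, and the floor loses at most $6/7$, so
\[
\left\lfloor\frac{30n}{7}\right\rfloor\ge\frac{30n}{7}-\frac67 .
\]
This estimate alone is too weak for small $n$. We will instead handle each residue class exactly. For each $s$ we need
\[
30q+\left\lfloor\frac{30s}{7}\right\rfloor>\frac{132(7q+s)}{31}.
\]
Multiplying by $31$, this becomes
\[
6q+31\left\lfloor\frac{30s}{7}\right\rfloor>132s .
\]
The values of $\lfloor30s/7\rfloor$ for $s=0,\dots,6$ are $0,4,8,12,17,21,25$. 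Multiplied by $31$, they give $0,124,248,372,527,651,775$. The corresponding values of $132s$ are $0,132,264,396,528,660,792$.

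In the non-exceptional classes we need the condition without the $-1$:
\begin{itemize}
\item For $s=0$ it reads $6q>0$, which holds since $n\ge1$ forces $q\ge1$.
\item For $s=1,2,3,5,6$ it reads $6q>8,16,24,9,17$ respectively, i.e.\ $q\ge2,3,5,2,3$.
\end{itemize}

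So the crude bound fails for small $q$, and this is the main obstacle. It is resolved by checking the finitely many small $n$ directly:
\[
n\in\{1,2,3,5,6,8,9,10,12,13,16,17,19,20,24,27,31\}.
\]
Here $n=1$ needs $4+1=5>132/31\approx4.26$, which holds; the other cases are similar comparisons of $\lfloor30n/7\rfloor+1$ with $132n/31$.

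Including the $+1$ from the statement makes everything easier, and this is the form we will actually use. With $M(n)+1$ the requirement becomes
\[
6q+31\bigl(\lfloor 30s/7\rfloor+1-\epsilon\bigr)>132s,
\]
where $\epsilon=1$ in the exceptional class $s=4$, $q\ge1$, and $\epsilon=0$ otherwise.
\begin{itemize}
\item For $\epsilon=0$ the left side exceeds $132s$ for all $q\ge0$, since $31\lfloor30s/7\rfloor+31>132s$ in each case ($155>132$, $279>264$, $403>396$, $682>660$, $806>792$, and $s=0$ is trivial).
\item In the exceptional case the condition is $6q+527>528$, i.e.\ $q\ge1$, which is exactly its hypothesis.
\item The case $n=4$ ($q=0$, $s=4$) is non-exceptional, so there $M(4)+1=18>528/31$.
\end{itemize}
This completes the proof with no small-case enumeration, and the only delicate point is the exceptional class $n\equiv4\pmod 7$.
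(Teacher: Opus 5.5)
Your final argument is correct and is exactly the paper's proof. You compute $31(M(n)+1)-132n=6q+31(\lfloor 30s/7\rfloor+1-\epsilon)-132s$ residue by residue, obtaining $6q-1>0$ in the exceptional class and handling $n=4$ separately; your margins $31,23,15,7,30,22,14$ are precisely the paper's constants $c_s$.

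The opening reduction to $\lfloor 30n/7\rfloor>132n/31$ should simply be deleted. It is false for small $n$ (already $4<132/31$ at $n=1$, and $17<528/31$ at $n=4$), and your list of exceptions is inaccurate: it contains $19$ and $27$, which satisfy the inequality, and omits $4$, which fails it. None of this is used in the argument you actually give.
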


\begin{proof}
Write $n=7q+s$, $0\le s\le6$. If $s\ne4$, or if $n=4$, then $M(n)=\lfloor30n/7\rfloor$, and direct reduction by residues gives
\[
31(M(n)+1)-132n=6q+c_s,
\]
where
\[
(c_0,c_1,c_2,c_3,c_4,c_5,c_6)=(31,23,15,7,30,22,14).
\]
All values are positive. In the exceptional case $s=4$ and $q\ge1$, \cref{thm:delta-exact} gives $M(n)=30q+16$, so
\[
31(M(n)+1)-132n
=31(30q+17)-132(7q+4)
=6q-1>0.
\]
The smallest margin occurs at $q=1$, i.e. $n=11$, where
\[
M(11)+1-\frac{132}{31}\,11=\frac{5}{31}.
\]
\end{proof}

\begin{proof}[Proof of Theorem~\ref{thm:main}]
The lower bound
\[
f(n,7,4)\ge M(n)
\]
follows from \eqref{eq:trivial-lower}.

Suppose, for a contradiction, that
\[
f(n,7,4)\ge M(n)+1.
\]
Then there exists a balanced $7$-partite $K_4$-free graph $G$ with
\[
\delta(G)\ge M(n)+1>\frac{132}{31}n
\]
by \cref{lem:threshold-gap}. The structural \cref{lem:threshold} implies $\chi(G)\le3$. But by the definition of $M(n)=\delta(n,7,3)$, every three-colourable balanced $7$-partite graph with parts of size $n$ satisfies
\[
\delta(G)\le M(n),
\]
contradicting $\delta(G)\ge M(n)+1$. Thus
\[
f(n,7,4)\le M(n).
\]
Together with the opposite inequality, this proves
\[
f(n,7,4)=M(n),
\]
and the explicit formula follows from \cref{thm:delta-exact}.
\end{proof}

\section{Concluding remarks}

The argument separates the problem into an integer optimization inside the three-colourable class and a structural exclusion of the non-three-colourable branch. The general upper bound for $\delta(n,7,3)$ is not attained precisely when $n\equiv4\pmod7$ and $n\ge11$; the obstruction is detected by equality conditions in the support inequalities and reduces to the impossible integer relation $b(q+1)=1$.

In particular, this resolves the excluded $r=7$, $t=3$ case and gives $f(n,7,4)=\delta(n,7,3)$ for every $n\ge1$.

\section*{Declaration of Generative AI Use}

ChatGPT (OpenAI) was used to suggest potential arguments for selected parts of the proof, as well as to assist with reviewing the proof and translating the manuscript from Japanese into English. All mathematical arguments incorporated into the final manuscript were independently verified by the author, who assumes full responsibility for the content.

\end{document}